\newcommand{\ind}{1\!\!1}
\newcommand{\card}{\mbox{Card}}
\newtheorem{theorem}{ \textnormal{\bf{T\scriptsize{HEOREM}}}}
\newtheorem{proposition}{\textnormal{\bf{P\scriptsize{ROPOSITION}}}}
\newtheorem{lemma}{\textnormal{\bf{L\scriptsize{EMMA}}}}
\theoremstyle{definition}
\newtheorem{definition}{\textnormal{\bf{D}\scriptsize{EFINITION}}}
\theoremstyle{remark}
\newtheorem{remark}{\textnormal{\bf{R\scriptsize{EMARKS}}}}
\author{%
  Emmanuel Boissard \footnote{ Institut Math\'ematique de Toulouse, Université Paul Sabatier, 118 Route de Narbonne, F 31062 Toulouse Cedex 9,    France. First-Name.Name@math.univ-toulouse.fr}
 \and %
 Serge Cohen \footnotemark[1]
  \and %
 Thibault Espinasse \footnotemark[1] 
\and %
 James Norris \footnote{Statistical Laboratory, Cambridge University, Centre for Mathematical Sciences
Wilberforce Road, Cambridge CB3 0WB England. j.r.norris@statslab.cam.ac.uk}  }
\title{Diffusivity of a random walk on random walks}
\begin{document}
\maketitle

\begin{abstract} 
  We consider  a random walk $\left(Z^{(1)}_n, \cdots, Z^{(K+1)}_n \right) \in \mathbb{Z}^{K+1}$ with 
the constraint that each coordinate of the walk is at distance one from the following one. 
In this paper, we show that this random walk  is slowed down by a variance factor $\sigma_K^2 = \frac{2}{K+2}$ with respect to the case of the classical simple random walk without constraint. 
\end{abstract}

\noindent \textit{Keywords}: Random walk, Graph, Central limit theorem 

\noindent \textit{AMS classification (2000)}: 05C81, 60F05.

\section{Presentation of the random walk}

Let $\left(Z^{(1)}_n, \cdots, Z^{(K+1)}_n \right) \in \mathbb{Z}^{K+1}$ denote the heights of $K+1$ 
simple random walks on $\mathbb{Z}$, conditioned on satisfying

\begin{equation}\label{e:cond}
\forall n \in \mathbb{N}, \forall i \in \left[ 1, K\right], \left| Z^{(i+1)}_n - Z^{(i)}_n \right| = 1. 
\end{equation}

More precisely, the random walk is a Markov chain on the state space of $K$-step walks in $\mathbb Z$

\begin{equation*}
 \mathcal{S}_K = \{ (z^{(1)}, \ldots, z^{(K+1)}) \in \mathbb{Z}^{K+1}, \quad \forall i \in \left[ 1, K\right], |z^{(i+1)} - z^{(i)}| = 1 \}
\end{equation*}

\noindent where the next step from $z \in \mathcal{S}_K$ is selected uniformly among the neighbours of $z$
in the usual lattice $\mathbb{Z}^{K+1}$ that belong to $\mathcal{S}_K$. In other words, we consider $K+1$ simple
random walks on the lattice $\mathbb{Z}$ coupled under a shape condition.

As in the case of a simple random walk, the rescaled trajectory of a walker, say $Z^{(1)}$, will converge in law to a Brownian motion. However, it is interesting to note that the constraint between each coordinate only slow down the walk by decreasing its variance.

  Since it is classical to illustrate for our students the simple random walk as the motion of a drunk man,  we can illustrate the previous  mathematical fact by considering the random walk as the motion of a chain of prisoners. It should convince even non mathematicians that the motion of the walk is slowed by the constraint. However it seems very hard to guess the variance from this comparison !

More precisely, denote

\begin{equation*}
\forall t \geq 0, \forall n \in \mathbb{N}, \xi_t^{(n)} = \frac{Z^{(1)}_{\lfloor nt\rfloor}}{\sqrt{n}},
\end{equation*}

where $\lfloor x \rfloor $ is the integer part of the real number $x.$

\begin{theorem} \label{t:var}
 The rescaled random walk $\left(\xi_t^{(n)}\right)_{t \geq 0} $ converges in law, as
 $n$ goes to infinity, to a Brownian motion with variance
\begin{equation*}
\sigma_K^2 = \frac{2}{K+2}. 
\end{equation*}
\end{theorem}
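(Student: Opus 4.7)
My approach reduces the statement to a martingale functional CLT applied on the finite-state shape chain. Set $\eta_n = (Z^{(i+1)}_n - Z^{(i)}_n)_{i=1}^K \in \{-1,+1\}^K$; this is a Markov chain on the $2^K$-point shape space whose transitions do not involve $Z^{(1)}_n$. It is aperiodic (every $\eta$ has a self-loop: pick $\delta_1=\cdots=\delta_{K+1}$) and irreducible (any two shapes are connected by a path of length $O(K)$), and it is reversible because reversing an admissible move $\delta$ is accomplished by $-\delta$. Hence there is a unique invariant law $\pi$, satisfying $\pi(\eta)\propto N(\eta)$ where $N(\eta)$ denotes the number of admissible increments from $\eta$.

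Rather than analyze $Z^{(1)}$ directly, I pass to the center of mass $S_n = \sum_{j=1}^{K+1} Z^{(j)}_n$. The bound $|Z^{(j)}_n - Z^{(1)}_n|\le K$ gives $Z^{(1)}_n = S_n/(K+1) + O(1)$, reducing the theorem to an invariance principle for $S_n$ with asymptotic variance $(K+1)^2\sigma_K^2$. The virtue of $S_n$ is the reflection symmetry $z\mapsto -z$, which forces $E_\pi[S_1-S_0]=0$; the conditional drift $F(\eta)=E[S_1-S_0\mid\eta_0=\eta]$ need not vanish pointwise for $K\ge 2$, but having zero $\pi$-mean it lies in the range of $I-P$ on the finite state space. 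Solving the Poisson equation $(I-P)\beta=F$ yields a bounded $\beta$, so that $M_n = S_n + \beta(\eta_n)$ is a genuine martingale in the natural filtration; up to a uniformly bounded perturbation, $M_n=(K+1)Z^{(1)}_n$.

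The martingale functional CLT (combined with ergodicity, which gives convergence in probability of the conditional quadratic variation $\tfrac{1}{n}\sum_k E[(M_k-M_{k-1})^2\mid \mathcal F_{k-1}]\to v$) then yields convergence of $M_{\lfloor nt\rfloor}/\sqrt n$ in the Skorokhod sense to a Brownian motion of variance $v=E_\pi[(M_1-M_0)^2]$, and hence of $\xi_t^{(n)}$ to a Brownian motion of variance $v/(K+1)^2$. The main obstacle, where I expect to spend most of the effort, is the explicit identification $v=2(K+1)^2/(K+2)$. The admissible increments have the clean recursive description $\delta_{i+1}\in\{\delta_i,-\epsilon_i\}$, encoded by $2\times 2$ transfer matrices on $\mathbb R^2$; through these, the stationary measure, the Poisson solution and the second moment $v$ can all be put on a common combinatorial footing. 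The neat target value $2/(K+2)$ strongly suggests an underlying combinatorial identity I would search for (perhaps by finding explicit weights $\alpha_j$ making $\sum_j \alpha_j Z^{(j)}_n$ a martingale directly); absent that, induction on $K$ using the transfer-matrix recursion should still carry the computation through.
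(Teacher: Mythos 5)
Your soft analysis is sound and is essentially the paper's own route, modulo cosmetics: the shape chain $\eta_n$ on $\{-1,1\}^K$, its degree-biased reversible invariant law, the Poisson equation $(I-P)\beta=F$ for the conditional drift (which is exactly the paper's Hodge decomposition of the edge field recording the height increment into a gradient part $\nabla f$ plus a divergence-free part), the resulting bounded corrector, and the martingale FCLT with limiting variance equal to the stationary second moment of the martingale increments. The detour through the center of mass $S_n$ is harmless but unnecessary: the reflection $z\mapsto -z$ already shows that the conditional drift of $Z^{(1)}$ itself has zero $\pi$-mean (it is odd under $\eta\mapsto-\eta$ while $\pi$ is even), so you can solve the Poisson equation for $Z^{(1)}$ directly, as the paper does.

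The genuine gap is that you stop exactly where the theorem begins: you never compute $v$. Everything up to ``convergence to a Brownian motion with some variance'' is standard machinery for additive functionals of a finite ergodic chain; the content of the statement is the value $2/(K+2)$, and for that you offer only candidate directions (transfer matrices, a hoped-for combinatorial identity, induction on $K$) without executing any of them. The paper's actual work lies precisely there: it exhibits the corrector in closed form, $f(a)=\sum_{i:\,a_i=-1}\frac{3+2(K-i)}{K+2}$ --- an affine function of the shape coordinates, i.e.\ exactly your parenthetical guess of weights $\alpha_j$ making $\sum_j\alpha_j Z^{(j)}_n$ a martingale, so that guess is the right one but must actually be solved for; it proves by induction on $K$, using a recursive construction of the shape graph $G_{K+1}$ from $G_K$, that this $f$ solves the Poisson equation; it observes that $\nabla f$ is translation-stationary on the hypercube (a property that is not automatic for the Hodge decomposition of a stationary field and requires proof); and it then evaluates $\sigma_K^2=\|A\|^2-\langle A,\nabla f\rangle=1-\frac{K}{K+2}$ by a flux computation again exploiting the recursive graph structure. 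None of these steps is routine, and none is carried out in your proposal; as written it establishes an invariance principle with an unidentified diffusivity, not the theorem.
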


Convergence to the Brownian motion is the  usual invariance principle~:~the noteworthy statement here is that it is possible 
to give an explicit expression for the limit diffusivity of the process, and that its expression is particularly simple.

Our object of interest, the motion of $ Z^{(1)},$ is a non-Markovian process that falls into the
class of random walks with internal structure. Related questions of limit diffusivity for random walks conditioned to respect some geometric shape have been studied in
the literature, under the name of ``spider random walks'' or ``molecular spiders'', see \cite{Antal07}. The computation of the limit
diffusion coefficient is also a central aim there, although the model and methods are different.

Our initial motivation was however more remote. Actually 
we first addressed this question starting from combinatorial 
problems related to $6$-Vertex model in relation with the Razumov 
Stroganov conjecture (See~\cite{Cantini11} for instance). 
The problem can also be related to random graph-homomorphisms
(See~\cite{Yadin07}) or the Square Ice Model (See~\cite {Lieb67}
where $Z^{(1)}$ evolves on a torus.) 

Roughly speaking we can say that, in the literature we read, the 
authors consider questions related to the uniform distribution 
on a sequence of finite graphs $G_K $ and wonder about various asymptotics 
when $ K \to + \infty.$ Later in the article the evolution 
of $Z_n$ will be described as the simple random walk on a graph $G_K .$ Hence on the one hand our problem 
is a very simplified version of the problems stated above,  On the other hand
we were surprised to have  such a simple formula for $\sigma_K^2 = \frac{2}{K+2},$ 
which is true for all $ K $ and not only for $ K \to + \infty.$ We thought in the beginning that
the proof of this fact should be simple but it turns out that, 
although elementary, the tools used to obtain the result are 
more sophisticated than  expected. It is the aim of this note 
to show these tools.

To prove the theorem, we will look for a decomposition
\begin{equation*}
Z^{(1)}_n = M_n + f(Z_{n-1}, Z_n), 
\end{equation*}

where  $(M_n)_{ n \in \mathbb N}$  is a martingale, and where $f : \mathbb{Z}^{K+1} \times \mathbb{Z}^{K + 1} \rightarrow \mathbb R $ is a bounded function. 
We will then show that the following limit exists :

\begin{equation*}
\sigma_K^2 = \lim_{n \rightarrow \infty } \frac1{n} \mathbb{E}\left[M_n^2 \right]. 
\end{equation*}

and that it is indeed the desired diffusivity. The path to this conclusion is akin to classical results for Central Limit Theorems for Markov chains (E.g. \cite{Meyn09}).

We will use another equivalent, albeit more geometric, point of view on this decomposition.
We split the chain in two parts : on the one hand, the motion of one of the walkers, and on the other hand, the relative positions of the walkers (which we call the ``shape''
of the chain at a given time). The latter part is a Markov chain over the state space $\{-1, 1 \}^{K}$ and our quantity of interest
is (almost) an observable of this chain. Computing the martingale decomposition that we wish for amounts to
decomposing a discrete vector field over this new state space into a 
 divergence-free part (corresponding to the martingale part) and a
gradient part (corresponding to the function $f$). Owing to a particular geometric property of this vector field,
for which we coin the term ``stationarity'',
it is indeed possible to perform this calculation explicitly.

\section{The random walk with constraint}
\label{s:graph}
Let us denote  $$\forall n \in \mathbb{N},\forall i \in \left[ 1, K\right], Y_n^{(i)} = Z_n^{(i+1)}- Z_n^{(i)}, $$
and $$Y_n = \left(Y_n^{(1)}, \cdots, Y_n^{(K)} \right) \in \left\{ -1,1\right\}^K. $$ Here $Y_n$ describes the shape  of  $\left(Z^{(1)}_n, \cdots, Z^{(K+1)}_n \right) \in \mathbb{Z}^{K+1}$, i.e. the
position of each  $ Z_n^{(i)} $ relatively to the previous one, and 
belongs to  $V_K = \left\{-1,1\right\}^K$,  whereas $Z^{(1)}_n$  can be seen as  the height of the first walker.
Obviously, the evolution of the chain of walkers may be described by the variables $\left(Z^{(1)}_n,Y_n\right)_{ n \in \mathbb N}$.


For a convenient analysis, we will represent our process as the simple random walk on a (multi)-graph $G_K$, which we define below.

Set $V_K = \{-1, 1 \}^K$ : 
the multi-graph $G_K$ is given as a triplet $(V_K, E_K^+ ,E_K^-)$ where $E_K^+, E_K^- \subset  V_K \times V_K  $ 
are two edge sets, called respectively the set of ``positive`` and ''negative`` edges. A couple $(a, b) \in V_K \times V_K$,
$a \neq b$, belongs to $E_K^{+}$ if the vector $b - a \in \{-2, 0, 2 \}^K$ has nonzero entries of alternating signs, with the first one
negative.
Moreover, $E_K^+$ also contains a loop from each $a \in V_K$ to itself, noted $(a, a)^+$.

Likewise, $E_K^-$ contains those couples $(a, b) \in V_K \times V_K$,
$a \neq b$, such that $b - a \in \{-2, 0, 2 \}^K$ has nonzero entries of alternating sign, with the first one
positive, and self-loops noted $(a, a)^-$ for each $a \in V_K$.

Set $E_K = E_K^+ \cup E_K^-$.
Finally, we consider the following function on $E_K.$
\begin{definition}
  \label{def:A}
Let $A : E_K \rightarrow \{ 1, -1\}$ be the function that takes the value $1$ on $E_K^+$ and $-1$ on $E_K^-$.
Note that we have, for any $a \in V_K$, $A((a,a)^\pm )= \pm 1$.
\end{definition}
\begin{proposition}
 Let $(W^K_n)_{n \geq 0}$ be the simple random walk on $G_K$. 
 The processes $\left(Y_n, Z^{(1)}_n \right)_{ n \in \mathbb N}$ and $\Big(W^K_n, \sum_{j = 1}^n A(W^K_{j-1}, W^K_{j} ) \Big)_{n \in \mathbb{N}}$ have the same distribution.
\end{proposition}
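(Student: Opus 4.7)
The plan is to explicitly parameterize the admissible one-step transitions of the chain $(Z^{(1)}_n, Y_n)$ and to match them, edge by edge, with those of the multigraph $G_K$ decorated by the label $A$.

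First, since the $K+1$ walkers step simultaneously, a transition from $z \in \mathcal{S}_K$ is encoded by an increment vector $v \in \{-1,1\}^{K+1}$ with $z + v \in \mathcal{S}_K$. Writing $\epsilon_i = z^{(i+1)} - z^{(i)}$, the condition $z + v \in \mathcal{S}_K$ reads $|\epsilon_i + v^{(i+1)} - v^{(i)}| = 1$ for every $i \in [1,K]$. Since $v^{(i+1)} - v^{(i)} \in \{-2,0,2\}$, this forces $v^{(i+1)} - v^{(i)} \in \{0, -2\epsilon_i\}$: at each site $v$ either keeps its value or flips its sign, and when it flips, the direction of the flip is dictated by the sign of $\epsilon_i$. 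Consequently $v$ is entirely determined by the pair $(v^{(1)}, S)$, where $S \subseteq \{1,\ldots,K\}$ is its set of flip positions, and for fixed $v^{(1)}$ the admissible $S$ are exactly those for which the restriction $\epsilon|_S$ is the alternating sequence starting with $v^{(1)}$.

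Second, I would read off the effect of such a move on the observables of interest. The new shape $\epsilon'$ satisfies $\epsilon'_i = -\epsilon_i$ for $i \in S$ and $\epsilon'_i = \epsilon_i$ otherwise, while $Z^{(1)}$ is incremented by $v^{(1)}$. The shape change $\epsilon' - \epsilon$ therefore has nonzero coordinates $-2\epsilon_{i_1}, -2\epsilon_{i_2}, \ldots$, alternating in sign, starting with $-2$ if $v^{(1)} = +1$ and with $+2$ if $v^{(1)} = -1$. Comparing with Definition~\ref{def:A} and the description of $E_K^{\pm}$, the non-trivial admissible moves from $z$ are in bijection with the non-loop edges of $G_K$ issued from the shape $\epsilon$, and under this bijection the value $v^{(1)}$ coincides with $A$. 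The two trivial moves $v = (+1,\ldots,+1)$ and $v = (-1,\ldots,-1)$ (corresponding to $S = \emptyset$) leave the shape unchanged and are matched respectively with the self-loops $(\epsilon,\epsilon)^+$ and $(\epsilon,\epsilon)^-$, so the bijection extends to every edge incident to $\epsilon$.

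From this the conclusion is immediate: uniform selection of the next step among admissible increments $v$ at $z$ coincides with uniform selection of an edge at $\epsilon$ in $G_K$, so $(Y_n)$ and $(W^K_n)$ share a transition kernel; the identity $v^{(1)} = A$ then identifies the successive increments of $Z^{(1)}_n$ with the summands of $\sum_{j=1}^n A(W^K_{j-1}, W^K_j)$. Both processes being Markov, matching initial laws yields the equality in law of the two pairs. The main delicate point is the combinatorial bookkeeping of the second step: one must verify that for a non-empty flip set $S$ exactly one sign of $v^{(1)}$ is admissible (so each non-loop edge is counted once and is correctly classified as positive or negative), whereas both signs are admissible for $S = \emptyset$ (accounting for the two self-loops). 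This is precisely what makes the degree of every vertex in $G_K$ equal to the number of admissible transitions of the constrained chain, and thus makes the simple random walk on $G_K$ reproduce the correct transition probabilities.
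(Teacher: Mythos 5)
Your proof is correct and follows essentially the same route as the paper: both establish the edge-by-edge correspondence between admissible transitions of the constrained walk and edges of $G_K$, showing that the shape increment has alternating nonzero entries whose first sign is determined by the step $v^{(1)}$ of the first walker (so that $v^{(1)}=A$), with the two rigid translations matching the two self-loops. Your parameterization by increment vectors $v\in\{-1,1\}^{K+1}$ and flip sets $S$ is a slightly cleaner organization of the same bijection that the paper builds via the indices $\tau_j$ of shape disagreement.
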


\begin{proof}

It is sufficient to prove that the two Markov chains (taking values in $\left\{-1,1\right\}^K \times \mathbb{Z}$) 
$\left(Y_n, Z^{(1)}_n \right)_{ n \in \mathbb N}$  and 
$\Big(W^K_n, \sum_{j = 1}^n A(W^K_{j-1}, W^K_{j} ) \Big)_{n \in \mathbb{N}}$
have the same transition matrix.

We claim that  
\begin{multline*}
\mathcal{L}\left(Y_{n+1} - Y_n, Z^{(1)}_{n+1}-Z^{(1)}_n| Y_n,Z_n^{(1)}\right) =\\ \mathcal{L} \left(W_{n+1} - W_n, A(W^K_{n}, W^K_{n+1})| W_n, \sum_{j = 1}^n A(W^K_{j-1}, W^K_{j} ) \right).
\end{multline*}

Recall that $ \mathcal{S}_K = \left\{ z \in \mathbb{Z}^{K+1}, \forall i \in [1,K], \left| z^{(i + 1)} - z^{(i)} \right| = 1 \right\}$.
Denote 

\begin{equation*}
\mathcal{A} =  \left\{ (z_1,z_2) \in \mathcal{S}_K^2, \forall i \in [1,K+1], \left| z_1^{(i)} - z_2^{(i)} \right| = 1 \right\}.
\end{equation*}

Since $(Z_n)_{n \in \mathbb{N}}$ and $(W_n)_{n \in \mathbb{N}}$ are both simple random walks, the corresponding transition matrices are given respectively by
\begin{align*}
P^{Z}_{z_1,z_2} &= \frac{1}{\card \left\{ z \in \mathcal{S}, (z_1,z) \in \mathcal{A} \right\}} \ind_{(z_1,z_2) \in \mathcal{A}} 
\\ P^{W}_{w_1,w_2} & = \frac{1}{\card \left\{ w \in \left\{-1,1\right\}^K, (w_1,w) \in E_K \right\}} \ind_{(w_1,w_2) \in E_K}.
\end{align*}

Denote now
 \begin{align*}
  \delta : \; & \mathcal{S} \rightarrow \{-1, 1 \}^K \\
  {} & (z^{(1)}, \ldots, z^{(K + 1)}) \mapsto (z^{(2)} - z^{(1)}, \ldots, z^{(K + 1)} - z^{(K)}),
 \end{align*}

so that $Y_n= \delta(Z_n), n \in \mathbb{N}$.

To prove the proposition, it is sufficient to prove that

\begin{equation*}
(z_1,z_2) \in \mathcal{A} \Leftrightarrow \left| z^{(1)}_{2}-z^{(1)}_1 \right| = 1 \text{ and }  \delta(z_2) - \delta(z_1) \in E_K^{\epsilon},
\end{equation*}

where $\epsilon = - \operatorname{sgn}(z^{(1)}_{2}-z^{(1)}_1)$ (and $\operatorname{sgn}$ is the sign function).

\bigskip

First note that, if $\epsilon = \pm 1$, and if $\forall i \in [1,K+1], z_1^{(i)} =z_2^{(i)} - \epsilon$, then $(z_1,z_2) \in \mathcal{A}$.
Moreover, we have in this case $\delta(z_1) = \delta(z_2)$, and $(\delta(z_1),\delta(z_2)) \in E_K^\epsilon$ as claimed.
On the other hand, if $\delta(z_1) = \delta(z_2)$ and $z_2^{(1)}-z^{(1)}_1 = \epsilon \in \{-1,1\}$, then $\forall i \in [1,K+1], z_1^{(i)} =z_2^{(i)}  + \epsilon$. 

For the chain, it means that,  if $Y_{n+1} = Y_n$ then  $Z^{(1)}_{n+1} - Z^{(1)}_n$ is $+1,-1$ with equal probability $\frac{1}{2}$ independently
of $Y_n,Y_{n+1},Z^{(1)}_n$.

Assume now that  $ z_{1} \neq z_2 $. We will prove that  $(\delta(z_1), \delta(z_1)) \in E_K^\epsilon$, where $\epsilon = z^{(1)}_1-z^{(1)}_2$, i.e. $\delta(z_2)-\delta(z_1) \in \{-2,0,2\}^K$, with nonzero entries of alternating signs, and a first one of the sign of $\epsilon$.

Indeed,  there is a first index $$ \tau_1 = \inf \{ i \in \left[ 1, K\right], \; 
\mbox{such that} \quad  \delta(z_{1})^{(i)} \neq  \delta(z_2)^{(i)} \}. $$

There are two possible cases
$$ 
\delta(z_1)^{(\tau_1)} = +1, \; \delta(z_2)^{(\tau_1)} = -1, \;  z^{(\tau_1)}_{2} = z^{(\tau_1)}_1+1, \; 
z^{(\tau_1+1)}_{2} = z_1^{(\tau_1+1)}-1,$$
$$ 
\delta(z_1)^{(\tau_1)} = -1, \; \delta(z_2)^{(\tau_1)} = +1, \;  z^{(\tau_1)}_{2} = z^{(\tau_1)}_1-1, \; 
z^{(\tau_1+1)}_{2} = z^{(\tau_1+1)}_1+1.$$

In the two cases we have  $ A(\delta(z_1), \delta(z_2)) =   z_2^{(\tau_1)}-z_1^{(\tau_1)} = z_2^{(1)}-z_1^{(1)}.$

Furthermore, we have 
$$\delta(z_2)^{(\tau_1)} - \delta(z_1)^{(\tau_1)} = - 2 A(\delta(z_1),\delta(z_2)).$$

Then if $ \tau_1< K $ let us define
$$ \tau_2 = \inf \{ i \in \left[\tau_1 + 1 , K\right], \; 
\mbox{such that} \quad     \delta(z_2)^{(i)} \neq \delta(z_1)^{(i)}  \},$$
where we set by convention $ \tau_2= K + 1,$ if the condition 
defining the infimum is never satisfied. 

Using the same arguments, we get 
$$\delta(z_2)^{(\tau_2)}-\delta(z_1)^{(\tau_2)} = 2A(\delta(z_1),\delta(z_2)). $$

By induction one can define 
$$ \tau_j = \inf \{ i \in \left[\tau_{j-1} + 1 , K\right], \; 
\mbox{such that} \quad     \delta(z_1)^{(i)}\neq \delta(z_2)^{(i)}   \},$$
until $ \tau_{j-1} \ge K.$ 

We have 
$$\delta(z_2)^{(\tau_j)}-\delta(z_1)^{(\tau_j)} = 2(-1)^{j} A(\delta(z_1),\delta(z_2)).$$

By definition, we get
$(\delta(z_1), \delta(z_2)) \in E_K^\epsilon, $
where $\epsilon = z_1^{(1)}-z_2^{(1)}$.

On the other hand, if $( \delta(z_1), \delta(z_2) ) \in E_K^\epsilon$, where $\epsilon = z^{(1)}_1-z^{(1)}_2 \in \{-1,1\}$, then one can recover explicitly $(z_1,z_2)$ from the definition of $\delta$. Moreover, the condition $ \forall i \in [1,K+1], \left| z_2^{(i)} - z_1^{(i)} \right|=1 $ is implied by the previous arguments (following the definitions of the $\tau_j$).

\end{proof}
%








%






%









%

We may denote, $Y_n \overset{e} \rightarrow Y_{n+1}$ when $e \in E^+$ ($Z^{(1)}_{n+1} = Z^{(1)}_n+1$) and $Y_n \overset{e} \leftarrow Y_{n+1}$ when $e \in E^-$ ($Z^{(1)}_{n+1} = Z^{(1)}_n-1$).

For a general $ a \in V_K $ the previous enumeration of its neighbors is surprisingly 
complicated but we can provide some simple examples.

For instance if  $a  = (1, \ldots, 1)\in V_K,$   has only $K+2$ neighbors:
\begin{align}
\label{neigh-1}
 a &\leftarrow a \nonumber
\\ a & \rightarrow a \nonumber
\\ \forall i \in [1,K], a & \rightarrow (1,\cdots, -1, \cdots,1), 
\end{align}

where the $-1$ is in the $i$-th position.

\bigskip

Note now that the graph $G_K$ can also be described inductively : there are only six following possibilities for $(Z^{(1)}_n,Z^{(2)}_n, Z^{(1)}_{n+1},Z^{(2)}_{n+1})$, described below:

\begin{minipage}[b]{0.45\linewidth}
 \includegraphics[scale=1,keepaspectratio=true]{./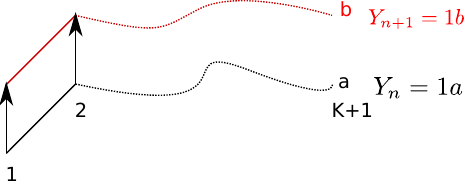}
 \includegraphics[scale=1,keepaspectratio=true]{./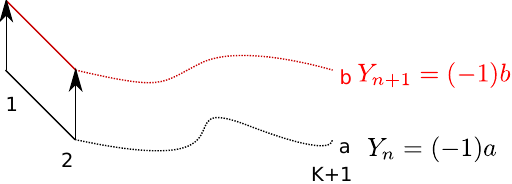}
 \includegraphics[scale=1,keepaspectratio=true]{./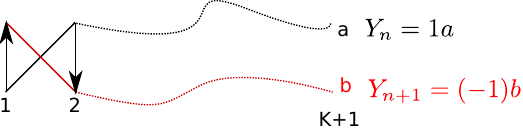}
\end{minipage}
\begin{minipage}[b]{0.45\linewidth}
 \includegraphics[scale=1,keepaspectratio=true]{./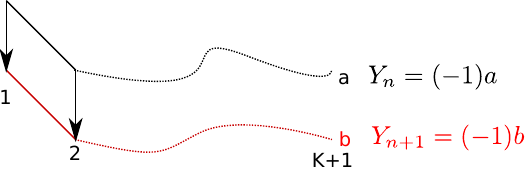}
 \includegraphics[scale=1,keepaspectratio=true]{./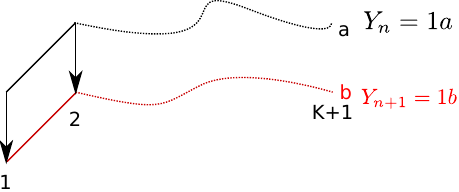}
 \includegraphics[scale=1,keepaspectratio=true]{./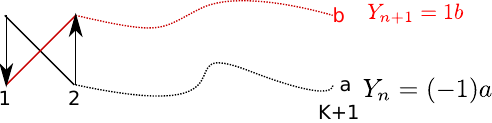}
\end{minipage}

In the figure, we have used the concatenation notation : given a string $a$, the string $1a$, resp. $(-1)a$, is obtained by adding a $1$, resp. $-1$ in front of $a$.
Looking only at the $3$ cases such that $Z^{(1)}_{n+1} - Z^{(1)}_n = 1$, we can deduce  the construction of $G_{K+1}^+$ from $G_K^+$:


 \begin{align*}
V_{K+1} &= \left\{-1,1\right\}^{K+1}\\
E_{K+1}^+ & = \left\{(1a,1b), (a,b) \in E_{K}^+ \right\}  \cup \left\{  ( (-1)a,(-1)b) ,(a,b) \in E_{K}^+ \right\} \cup \left\{ (1a,(-1)b), (a,b) \in E_{K}^- \right\}
 \end{align*}

Figure 1 
shows the first two graphs $G_1^+$ and $G_2^+$. Note that each edge of $G_1$ gives $3$ edges for $G_2$, one on each facet $\left\{ 1a, a \in V_K \right\}$, and $\left\{(-1)a, a \in V_K \right\}$ and one crossing from the facet $\left\{1a, a \in V_K \right\}$ to the facet $\left\{(-1)a, a \in V_K \right\}$.

 \begin{figure}[htbp]
\label{f:constr}
\caption{Construction of $G_1^+$ and $G_2^+$.}
\centering
\includegraphics[scale=0.5,keepaspectratio=true]{./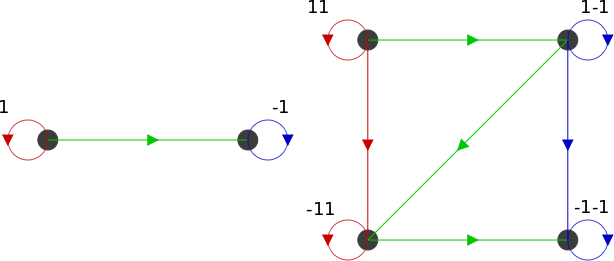}
\end{figure}

We obtain the cardinality $D_K$ of $E_K$ (as a multigraph) by induction:
$$D_K = 2.3^K .$$
We will also make use of the number $\delta_{K}$ of edges of the form $(1a,(-1)b) \in E_K^+$ which can be computed by induction:
$$\delta_K = 3^{K-1}. $$

















Let us now describe vector fields on this graph.

\section{Vector fields on graphs}


In the previous section a function $ A $ has been defined on edges of $ G_K.$ 
We will consider here $ A $ as a vector field on $G_K.$

\subsection{Definitions}
Let us first recall some classical definitions.

\begin{definition}[Vector fields]
 
A vector field on  $G_K = \left(V_K, E_K \right)$ is a function $S: E_K \rightarrow \mathbb{R}$ such that
\begin{equation}
  \label{eq:vect-fi}
  \forall (a,b) \in E_K, a \neq b, S(a,b) = -S(b,a).
\end{equation}
and such that, for any $a \in V_K$, $S((a,a)^+) = -S((a,a)^-)$.
\end{definition}

\begin{definition}[Gradient vector fields]
 
We say that the vector field $S$ on $G_K$ is a gradient vector field if there exists a function $f$
on the vertices of $G_K$ such that for each edge $(a, b) \in E_K$, $S(a, b) = f(b) - f(a)$.
The gradient vector field associated with $f$ is denoted by $\nabla f$.

\end{definition}

\begin{definition}[Divergence and divergence-free vector fields]
 
The divergence of a vector field $S$ at point $x \in G_K$ is defined by

\begin{equation*}
( \nabla \cdot S)(x) = \sum_{ (x,y) \in E_K} S(x, y).
\end{equation*}

We say that a vector field $S$ is divergence-free if its divergence vanishes at all points.

\end{definition}

We can endow the set of vector fields with a scalar product
$$\langle S, S' \rangle_{E_K} = \frac{1}{D_K}\sum_{e \in E_K} S(e)S'(e). $$

Please note that the sum runs over all edges, including loops $(a,a)^\pm$.

Denote also, for any subsets $\phi, \psi$ of $V_K$,
$$J_{\phi,\psi}(S) = \sum_{a \in \phi, b\in \psi, (a,b)\in E_K} S(a,b), $$
the flux  of $S$ going from $\phi$ to $\psi$. Note that a divergence free field $B$ verifies
$$\forall \phi \subset V_K, J_{\phi, V_K \setminus \phi}(B) = 0. $$

\subsection{Hodge decomposition of vector fields on graphs}


In  analogy with the case of vector fields in Euclidean spaces, we can decompose any vector field on 
$G_K$ into the sum of a gradient vector field and a divergence-free field. The following proposition is well-known.

\begin{proposition} \label{prop_dec_usual}
 Let $S$ be a vector field on $G_K$. There exist a unique gradient vector field $\nabla f$ and a unique divergence-free
field $B$ such that 

\begin{equation*}
 S = \nabla f + B.
\end{equation*}

Moreover $\langle \nabla f, B \rangle_{E_K}=0  $ 
\end{proposition}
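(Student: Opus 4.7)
The plan is to reduce the proposition to standard finite-dimensional linear algebra, once the correct discrete integration-by-parts formula has been established. Let $\mathcal{V}$ denote the (finite-dimensional) vector space of all vector fields on $G_K$, and let $\mathcal{G} \subset \mathcal{V}$ be the subspace of gradient vector fields and $\mathcal{D} \subset \mathcal{V}$ the subspace of divergence-free vector fields. First I would check that $\langle\cdot,\cdot\rangle_{E_K}$ is genuinely positive definite on $\mathcal{V}$ (which is immediate from its definition as a normalised sum of squares). It then suffices to show that $\mathcal{G}^\perp = \mathcal{D}$, for then the orthogonal decomposition $\mathcal{V} = \mathcal{G} \oplus \mathcal{G}^\perp$ yields existence, uniqueness, and orthogonality all at once.

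The key computation is the following discrete integration-by-parts identity: for every $f : V_K \to \mathbb{R}$ and every $S \in \mathcal{V}$,
\begin{equation*}
\langle \nabla f, S \rangle_{E_K} \;=\; -\frac{2}{D_K}\sum_{a \in V_K} f(a)\,(\nabla \cdot S)(a).
\end{equation*}
To derive it, I would split the sum defining the inner product into contributions from non-loop edges and from self-loops. The loop contribution vanishes since $\nabla f((a,a)^\pm) = f(a) - f(a) = 0$. For the non-loop part, I would use that every $(a,b) \in E_K$ with $a \neq b$ has its reciprocal $(b,a)$ also in $E_K$ (if $(a,b) \in E_K^+$ then $(b,a) \in E_K^-$, and vice-versa), together with the antisymmetry $S(b,a) = -S(a,b)$, to collapse $\sum_{(a,b), a\neq b}(f(b)-f(a))S(a,b)$ into $-2\sum_a f(a) \sum_{b \neq a,\,(a,b)\in E_K} S(a,b)$. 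Finally, the antisymmetry $S((a,a)^+) = -S((a,a)^-)$ on loops means that the two loop terms cancel inside $(\nabla \cdot S)(a)$, so that $\sum_{b\neq a,\,(a,b)\in E_K} S(a,b) = (\nabla \cdot S)(a)$, which yields the identity.

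From the identity the equality $\mathcal{G}^\perp = \mathcal{D}$ follows at once: $S \in \mathcal{G}^\perp$ iff $\sum_a f(a)(\nabla\cdot S)(a)=0$ for every $f$, iff $\nabla \cdot S \equiv 0$. Standard finite-dimensional Hilbert-space theory then gives $\mathcal{V} = \mathcal{G} \oplus \mathcal{D}$ as an orthogonal direct sum. Existence of the decomposition $S = \nabla f + B$ is the existence of this sum; the orthogonality $\langle \nabla f, B \rangle_{E_K} = 0$ is automatic; uniqueness of $B$ and of $\nabla f$ follows from $\mathcal{G} \cap \mathcal{D} = \{0\}$ (note that $f$ itself is only unique up to an additive constant on each connected component of $G_K$, but its gradient is unique).

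The only delicate step is the integration-by-parts identity, and within it the careful bookkeeping of self-loops: the fact that loops contribute zero to $\nabla f$ while being antisymmetric in $\pm$ for $S$ is precisely what makes the non-loop divergence agree with the full divergence, so that the identity comes out cleanly. Everything else is standard linear algebra.
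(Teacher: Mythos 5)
Your proof is correct, and it is essentially the argument the paper relies on: the paper states the proposition without proof, declaring it ``well-known'' and glossing the final identity as saying that gradient fields and divergence-free fields are orthogonal complements, which is exactly what your integration-by-parts identity establishes. Your bookkeeping checks out --- each unordered pair occurs in $E_K$ as both $(a,b)\in E_K^{+}$ and $(b,a)\in E_K^{-}$, the loops contribute nothing to $\nabla f$ and cancel inside $\nabla\cdot S$, and the resulting identity $\langle \nabla f, S\rangle_{E_K}=-\tfrac{2}{D_K}\sum_{a}f(a)(\nabla\cdot S)(a)$ gives $\mathcal{G}^{\perp}=\mathcal{D}$ and hence the orthogonal decomposition.
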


The last identity simply means that gradient fields and divergence-free fields are orthogonal complements of each other in the
vector space of vector fields over $G_K$.

In our case we are interested in stationary vector fields. 

\begin{definition}[Stationary vector field]
\label{def:stat}
A subgraph $G$ of the complete graph on  $\{-1, 1 \}^K$ is 
\emph{stationary}, if the following holds. For  $u, u' \in  \{-1, 1 \}^K$ and  $v \in \mathbb R^k$ such that $u + v, u' + v \in \{-1, 1 \}^K,$ 
if $(u, u + v)$ is an edge of $G$, then $(u', u' + v)$ is an edge of $G$.

 A vector field $S$ defined on a subgraph of
the complete graph on $\{-1, 1 \}^K$ with a stationary domain   is \emph{stationary}
if for all $(u, v)$ edges of $G$, $S(u, v)$ only depends on $u - v$
(where $\{-1, 1 \}^K$ is embedded in $\mathbb{R}^K$ in an
obvious way).
\end{definition}
\begin{remark}
Note that, thanks to the construction of $E_K^+$, it is stationary. 
Moreover if $(u,u+v) \in E_K^+ $ and $u',u'+v \in V_K$,
then $(u',u'+v) \in E_K^+$ and thus the vector field $A$ taking values
$1$, resp. $-1$, on $ E_K^+$, resp. $E_K^-$, is stationary.
So we may  expect the gradient vector field $\nabla f $ in the Hodge 
decomposition of $ A $ to be stationary. Unfortunately if $S$ is 
 a stationary vector field on $G_K$ and its decomposition is 
$$
 S = \nabla f + B
$$
as per Proposition \ref{prop_dec_usual}, then $\nabla f$  is not always stationary.


Nevertheless it turns out that the gradient vector field $\nabla f $ in the Hodge 
decomposition of $ A $ is actually stationary as it will be shown 
in the next section.  
\end{remark}

\subsection{Hodge decomposition of $A$}
\label{sec:hodge-A}
Let us recall the Definition~\ref{def:A}   the vector field $A$ on $G_K$  is such that 
\begin{align*}
 \forall e \in E_K^+, A(e) = 1
\\ \forall e \in E_K^-, A(e) = -1.
\end{align*}
In this section our aim is to compute a function $ f $ such that 
\begin{equation}
  \label{eq:def-gradA}
  ( \nabla \cdot A) (a) = ( \nabla \cdot (\nabla f)) (a) \quad \forall a  \in V_K.
\end{equation}
One can first remark that $ \forall a  \in V_K$
\begin{equation}
  \label{eq:nablaA}
  ( \nabla \cdot A) (a) = \quad \card\{b \in V_K,\; (a,b) \in E^+_K\} - \card\{b \in V_K,\; (a,b) \in E^-_K\}.
\end{equation}
In the previous equation we used the notation $\card$ for cardinality of sets. 
We will introduce various notations related to other cardinalities
\begin{eqnarray}
  \alpha(a) &=&  \card\{b \in V_K,\; (a,b) \in E^+_K\} \\
  \bar{\alpha}(a) &=&  \card\{b \in V_K,\; (a,b) \in E^-_K\}.
\end{eqnarray}
Then  we need also to define for $ 0 \le k \le K,$ 
$ \alpha_k(a) $ the number of the vertices $ b $ such that 
$ (a,b) \in E^+_K$ with $ k $  digits $ i \in \{1,\ldots,K\} $ 
such that $ a_i \neq b_i .$ Similarly 
$\bar{\alpha}_k(a) $ is the number of the vertices $ b $ such that 
$ (a,b) \in E^-_K$ and  $ k $  digits $ i \in \{1,\ldots,K\} $ 
such that $ a_i \neq b_i.$ 
Then we consider $\alpha_{ev}(a) = \sum_{k \; \; even}\alpha_k(a) $
and $\alpha_{od}(a) = \sum_{k \; \; odd} \alpha_k(a).$
Similarly $\bar{\alpha}_{ev}(a) = \sum_{k \; \; even}\bar{\alpha}_k(a) $
and $\bar{\alpha}_{od}(a) = \sum_{k \; \; odd} \bar{\alpha}_k(a).$

 Let us consider the function $ f_1 $ on $ V_K$ 
such that 
\begin{equation}
  \label{eq:f1}
f_1(a)= \alpha_1(a) - \bar{\alpha}_1(a) = \sum_{i=1}^K a_i.
\end{equation}
The last equation is trivial since $ \alpha_1(a)$ is the number
of the $ a_i $s equal to $ 1 $ and $ \bar{\alpha}_1(a) $
the number of $ a_i $s equal to $ -1.$ Obviously we also get
\begin{equation}
  \label{eq:sum-alpha1}
  \alpha_1(a) + \bar{\alpha}_1(a)= K 
\end{equation}
for any vertex $a$ in $V_K.$
Let us remark that for any function $ f $ on   $V_K$
$$ ( \nabla \cdot (\nabla f)) (a) = \sum_{(a,b) \in E_K} f(b) - f(a).$$
Since $ f_1$ yields the sum of the digits of any vertex, we first observe that 
if $ (a,b) \in E_K $ and the number of digits $ i \in \{1,\ldots,K\} $ 
such that $ a_i \neq b_i $ is even then $ f_1(b)-f_1(a) = 0.$
If the number of digits $ i \in \{1,\ldots,K\} $ 
such that $ a_i \neq b_i $ is odd and  $ (a,b) \in E_K^+$ then 
 $ f_1(b)-f_1(a) = -2.$ One can then deduce that 
\begin{equation}
  \label{eq:nabla-grad-f1}
  ( \nabla \cdot (\nabla f_1)) (a)= - 2 (\alpha_{od}(a) - \bar{\alpha}_{od}(a)).
\end{equation}

 It turns out that if we consider the function $ f_2$ on $ V_K$ 
such that 
\begin{equation}
  \label{eq:f2}
f_2(a)= \alpha_2(a) - \bar{\alpha}_2(a),
\end{equation}
then 
 \begin{equation}
  \label{eq:nabla-grad-f2}
  ( \nabla \cdot (\nabla f_2)) (a)= - (K+2) (\alpha_{ev}(a) - \bar{\alpha}_{ev}(a)).
\end{equation}
We will prove~\eqref{eq:nabla-grad-f2} by induction on $ K.$
To do that we split $ ( \nabla \cdot (\nabla f_2)) $ into the sum of two 
functions
\begin{eqnarray}
  \label{eq:phi} \phi(a) &=& \sum_{(a,b) \in E^+_K} f_2(b) - f_2(a)\\
  \label{eq:barphi} \bar{\phi}(a) &=& \sum_{(a,b) \in E^-_K} f_2(b) - f_2(a). 
\end{eqnarray}
To proceed the induction argument we remark that for any vertex $a$ in $V_K$
\begin{eqnarray}
  \label{eq:f1+} f_1(1a) &=& f_1(a) + 1\\
  \label{eq:f1-} f_1(-1a) &=& f_1(a) - 1 \\ 
  \label{eq:f2+} f_2(1a) &=& f_2(a) +  \bar{\alpha}_1(a) \\
  \label{eq:f2-} f_2(-1a) &=& f_2(a) -  \alpha_1(a).
\end{eqnarray}
We will then compute $ \phi(1a),\;  \phi(-1a),\; \bar{\phi}(1a), \; \bar{\phi}(-1a).$
The easiest computation is 
\begin{eqnarray*}
  \bar{\phi}(1a) &=&  \sum_{(1a,b) \in E^-_{K+1}} f_2(b) - f_2(1a)\\
&=&\sum_{(a,c) \in E^-_K}f_2(1c) - f_2(1a),
\end{eqnarray*}
since $(1a,-1c) \in E^+_{K+1}.$ Then 
\begin{eqnarray}
\bar{\phi}(1a) &=&\sum_{(a,c) \in E^-_K}f_2(c) - f_2(a) + \sum_{(a,c) \in E^-_K}
\bar{\alpha}_1(c) - \bar{\alpha}_1(a). \nonumber 
\end{eqnarray}
To evaluate $ \sum_{(a,c) \in E^-_K}
\bar{\alpha}_1(c) - \bar{\alpha}_1(a) $ we use that $ \bar{\alpha}_1(a) $ is the number of 
digits equal to $ -1 $ in $ a.$ If $ (a,c) \in E^-_K$ and if the number 
of $ a_i \neq c_i $ is even then $ \bar{\alpha}_1(c)= \bar{\alpha}_1(a).$
If this number is odd then $ \bar{\alpha}_1(c)= \bar{\alpha}_1(a)-1.$
Therefore $ \sum_{(a,c) \in E^-_K} \bar{\alpha}_1(c) - \bar{\alpha}_1(a) = - \bar{\alpha}_{od}(a).$
Hence
\begin{eqnarray}
\bar{\phi}(1a) &=& \bar{\phi}(a)- \bar{\alpha}_{od}(a)  \label{eq:rec-barphi1}.
\end{eqnarray}
One also get in the same way
\begin{equation}
  \label{eq:rec-phi-1}
\phi(-1a)= \phi(a)- \alpha_{od}(a).
\end{equation}
The induction is a bit more involved for 
\begin{eqnarray*}
\phi(1a) &=&  \sum_{(1a,b) \in E^+_{K+1}} f_2(b) - f_2(1a)\\
&=&  \sum_{(1a,1c) \in E^+_{K+1}} f_2(1c) - f_2(1a) + 
\sum_{(1a,-1c) \in E^+_{K+1}} f_2(-1c) - f_2(1a).
\end{eqnarray*}
Because of~\eqref{eq:f2+}
\begin{eqnarray*}
 \sum_{(1a,1c) \in E^+_{K+1}} f_2(1c) - f_2(1a) &=& \sum_{(a,c) \in E^+_K} f_2(c) - f_2(a) + \sum_{(a,c) \in E^+_K}\bar{\alpha}_1(c) -\bar{\alpha}_1(a) \\
&=& \phi(a) + \alpha_{od}(a).
\end{eqnarray*}
Then 
\begin{eqnarray*}
\sum_{(1a,-1c) \in E^+_{K+1}} f_2(-1c) - f_2(1a)&=& \sum_{(a,c) \in E^-_K} f_2(c) - f_2(a) - \sum_{(a,c) \in E^-_K}\alpha_1(c)  + \bar{\alpha}_1(a) \\
&=& \bar{\phi}(a) - \sum_{(a,c) \in E^-_K}\alpha_1(c)  + \bar{\alpha}_1(c)\\
&& \phantom{\bar{\phi}(a)}+ \sum_{(a,c) \in E^+_K} \bar{\alpha}_1(c) - \bar{\alpha}_1(a)\\
&=& \bar{\phi}(a) - K \bar{\alpha}(a) -\bar{\alpha}_{od}(a). 
\end{eqnarray*}
Hence
\begin{equation}
  \label{eq:rec-phi1}
\phi(1a)= \phi(a) + \bar{\phi}(a) + \alpha_{od}(a) - 
\bar{\alpha}_{od}(a) - K \bar{\alpha}(a).
\end{equation}
Similarly we get 
\begin{equation}
  \label{eq:rec-barphi-1}
\bar{\phi}(-1a)= \phi(a) + \bar{\phi}(a) - \alpha_{od}(a)  +
\bar{\alpha}_{od}(a) + K \bar{\alpha}(a).
\end{equation}
We can now evaluate the functions $\phi, \; \bar{\phi}.$
\begin{lemma}
\label{lem:phi_barphi}
$\forall a \in V_K,$ 
\begin{equation}
  \label{eq:phi}
  \phi(a) = \bar{\alpha}_{ev}(a)-(K+1) \alpha_{ev}(a) + \alpha_{od}(a) + \bar{\alpha}_{od}(a)
\end{equation}
\begin{equation}
  \label{eq:barphi}
  \bar{\phi}(a) = -\alpha_{ev}(a)+(K+1) \bar{\alpha}_{ev}(a) - \bar{\alpha}_{od}(a) - \alpha_{od}(a) 
\end{equation}
\end{lemma}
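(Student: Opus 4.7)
The plan is to prove both identities simultaneously by induction on $K$, using the four recurrences derived just above for $\phi(\pm 1 a)$ and $\bar{\phi}(\pm 1 a)$, together with the (straightforward) recurrences for the cardinalities $\alpha_{ev/od}$, $\bar{\alpha}_{ev/od}$ under the operation of prepending $\pm 1$. Before starting I would record a symmetry that halves the work: the involution $T : a \mapsto -a$ of $V_K$ swaps $E_K^+$ with $E_K^-$, hence exchanges $\alpha_k(a)$ with $\bar{\alpha}_k(-a)$, and since $f_2 \circ T = -f_2$ it yields $\phi(-a) = -\bar{\phi}(a)$. A direct check shows that the right-hand side of the target formula for $\bar{\phi}$ at $a$ equals the negative of the right-hand side of the target formula for $\phi$ at $-a$; hence it is enough to prove the $\phi$-identity, the $\bar{\phi}$-identity following automatically.

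For the base case $K = 1$, no two vertices of $V_1 = \{-1, 1\}$ differ in two positions, so $\alpha_2 \equiv \bar{\alpha}_2 \equiv 0$, $f_2 \equiv 0$, and $\phi \equiv 0$. A direct evaluation at $a = \pm 1$ shows that the right-hand side of the claimed formula for $\phi$ also vanishes.

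For the inductive step, fix $a \in V_K$ and consider the two lifts $1a, (-1)a \in V_{K+1}$. From the recursive description of $E_{K+1}^{\pm}$ given in Section~\ref{s:graph}, classifying the outgoing edges from $1a$ and $(-1)a$ according to the number of differing digits yields
\begin{equation*}
  \alpha_k(1a) = \alpha_k(a) + \bar{\alpha}_{k-1}(a), \qquad \bar{\alpha}_k(1a) = \bar{\alpha}_k(a),
\end{equation*}
\begin{equation*}
  \alpha_k(-1a) = \alpha_k(a), \qquad \bar{\alpha}_k(-1a) = \bar{\alpha}_k(a) + \alpha_{k-1}(a),
\end{equation*}
so that, after summing over the parity of $k$,
\begin{equation*}
  \alpha_{ev}(1a) = \alpha_{ev}(a) + \bar{\alpha}_{od}(a), \quad \alpha_{od}(1a) = \alpha_{od}(a) + \bar{\alpha}_{ev}(a),
\end{equation*}
with analogous relations for the remaining quantities. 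I would then substitute the inductive hypothesis into the recurrences for $\phi(1a)$ and $\phi(-1a)$ and simplify, exploiting the intermediate identity $\phi(a) + \bar{\phi}(a) = (K + 2)(\bar{\alpha}_{ev}(a) - \alpha_{ev}(a))$, which drops out immediately from the inductive formulas and collapses the composite terms appearing in the recurrence for $\phi(1a)$. What remains on each side is a linear combination of $\alpha_{ev}(a), \alpha_{od}(a), \bar{\alpha}_{ev}(a), \bar{\alpha}_{od}(a)$, which can be matched term by term with $\Phi_{K+1}(\pm 1 a)$ after rewriting via the cardinality relations above.

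The main obstacle is purely bookkeeping: even after the $T$-symmetry reduces the four cases to the two for $\phi$, each verification involves six or seven cardinalities and a careful tracking of the change of coefficient from $K+1$ to $K+2$. The symmetry and the $\phi + \bar{\phi}$ identity are the two levers that keep the algebra from becoming unwieldy, but one still has to pay attention to the signs contributed by the recurrences, since the $E_K^+$ versus $E_K^-$ asymmetry produces offsetting terms whose cancellation is what ultimately recovers the simple closed forms in the lemma.
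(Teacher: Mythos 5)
Your strategy is essentially the paper's: induction on $K$ via the recurrences obtained by prepending $\pm1$, combined with the recurrences for the counts $\alpha_{ev/od},\bar\alpha_{ev/od}$. The two economies you add are genuine and correct: the involution $a\mapsto -a$ does swap $E_K^+$ and $E_K^-$, gives $\phi(-a)=-\bar\phi(a)$, and exchanges the two claimed right-hand sides, so only the $\phi$-identity needs proof; and the sum of the two claimed formulas is exactly $(K+2)\bigl(\bar\alpha_{ev}(a)-\alpha_{ev}(a)\bigr)$, which collapses the $\phi(a)+\bar\phi(a)$ term in \eqref{eq:rec-phi1}. Your cardinality increments (with no extra $+1$) are the right ones under the convention, forced by the lemma itself, that the loops contribute $\alpha_0(a)=\bar\alpha_0(a)=1$ to $\alpha_{ev}$ and $\bar\alpha_{ev}$; note that they differ from the increments quoted in the paper's sketch, which implicitly drop the loop from $\bar\alpha_{ev}$, so you must use one convention consistently throughout.

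One concrete warning: you cannot take ``the four recurrences derived just above'' at face value. As printed, \eqref{eq:rec-phi-1} has the wrong sign: since $f_2(-1a)=f_2(a)-\alpha_1(a)$ and $\alpha_1(c)-\alpha_1(a)=-1$ on positive edges with an odd number of flipped digits, the correct statement is $\phi(-1a)=\phi(a)+\alpha_{od}(a)$. With the printed version your inductive step for $\phi((-1)a)$ misses the target by $2\alpha_{od}(a)$ (e.g.\ for $K=1$, $a=(1)$ one has $\phi((-1,1))=1$, not $-1$). So either re-derive that recurrence, or---more in the spirit of your symmetry---observe that the $\phi$-identity at $(-1)a$ is equivalent via $T$ to the $\bar\phi$-identity at $1(-a)$, which only requires \eqref{eq:rec-phi1} and \eqref{eq:rec-barphi1}, both of which are correct as printed. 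With that correction the bookkeeping does close and your plan is sound.
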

\begin{proof}
We will only sketch the proof performed by an easy induction on $ K $ for $\phi,$
computations are similar for $\bar{\phi}.$ Let us assume that~\eqref{eq:phi}, \eqref{eq:barphi}
hold for $ K,$ we have to compute $ \phi(1a) $ and $ \phi(-1a) $ and check that 
they fulfill ~\eqref{eq:phi}, \eqref{eq:barphi} for $ K+1.$
Because of~\eqref{eq:rec-phi1}
$$ \phi(1a) = \phi(a) + \bar{\phi}(a) + \alpha_{od}(a) - 
\bar{\alpha}_{od}(a) - K \bar{\alpha}_{ev}(a)  - K \bar{\alpha}_{od}(a).$$
One can check that $ \bar{\alpha}_{ev}(1a) = \bar{\alpha}_{ev}(a),\; \alpha_{ev}(1a)= \alpha_{ev}(a) + \bar{\alpha}_{od}(a),\;
 \alpha_{od}(1a)=  \alpha_{od}(a) + \bar{\alpha}_{ev}(a) + 1,\; \bar{\alpha}_{od}(1a)=\bar{\alpha}_{od}(a).$
Using~\eqref{eq:phi} for $ K,$  we get~\eqref{eq:phi} for $ K+1$ and $ \phi(1a).$ The computations 
for  $ \phi(-1a) $ are left to the reader.
\end{proof}

 By summing~\eqref{eq:phi} and~\eqref{eq:barphi} we get~\eqref{eq:nabla-grad-f2}, and 
we deduce that if we take 
\begin{equation}
  \label{eq:gradA}
  f = - \left ( \frac12 f_1 + \frac1{K+2} f_2 \right ) + \frac{K}2,
\end{equation}
$ A - \nabla f $ is divergence free. Please note that obviously the additive constant 
in~\eqref{eq:gradA} is arbitrary but it yields the following convenient 
expression  of $f(a) $ in terms of the digits of $ a \in V_K.$
\begin{lemma}
\label{lem:form-grad A}
\begin{equation}
  \label{eq:gradA-expli}
  f(a) = \sum_{i,\; a_i= -1} F_i^K,
\end{equation}
where 
\begin{equation}
  \label{eqFiK}
 F_i^K= \frac{3 + 2(K-i)}{K+2}.
\end{equation}
Moreover $\nabla f $ is a stationary gradient vector field. 
\end{lemma}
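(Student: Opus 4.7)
The plan is to unfold the definition~\eqref{eq:gradA} of $f$ and to express both $f_1$ and $f_2$ as explicit linear combinations of the digits $a_i$, so that the announced identity reduces to a coefficient-by-coefficient check. Equation~\eqref{eq:f1} already gives $f_1(a) = \sum_{i=1}^{K} a_i$, so the crux is to rewrite $f_2(a) = \alpha_2(a) - \bar\alpha_2(a)$ in closed form.

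From the definition of $E_K^\pm$, the neighbors of $a$ counted by $\alpha_2(a)$ (resp.\ $\bar\alpha_2(a)$) are in bijection with pairs of positions $i<j$ satisfying $a_i = +1,\ a_j = -1$ (resp.\ $a_i = -1,\ a_j = +1$). Using $\ind_{\{a_i = \pm 1\}} = (1 \pm a_i)/2$ and simplifying the resulting product inside the sum reduces the difference to
\begin{equation*}
 f_2(a) = \sum_{i<j} \frac{a_i - a_j}{2} = \tfrac{1}{2}\sum_{i=1}^{K} (K+1-2i)\, a_i,
\end{equation*}
the second equality following by swapping the order of summation. Substituting this and $f_1$ into~\eqref{eq:gradA} and combining the coefficients of each $a_i$ over the common denominator $K+2$ yields $f(a) = -\tfrac{1}{2}\sum_i F_i^K\, a_i + K/2$, with $F_i^K = (2K+3-2i)/(K+2)$ as in~\eqref{eqFiK}.

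To recover the form $f(a) = \sum_{i,\,a_i=-1} F_i^K$ stated in the lemma, I would use $\ind_{\{a_i=-1\}} = (1-a_i)/2$, which gives
\begin{equation*}
 \sum_{i:\,a_i=-1} F_i^K = \tfrac{1}{2}\sum_{i=1}^{K} F_i^K - \tfrac{1}{2}\sum_{i=1}^{K} F_i^K\, a_i.
\end{equation*}
Matching the two expressions for $f$ therefore reduces to checking the single identity $\sum_{i=1}^{K} F_i^K = K$, i.e.\ $\sum_{i=1}^{K}(2K+3-2i) = K(K+2)$, a one-line arithmetic progression. Stationarity of $\nabla f$ is then essentially free: the expression $f(a) = -\tfrac{1}{2}\sum_i F_i^K a_i + K/2$ is affine in $a$, hence for any edge $(u,u+v)$,
\begin{equation*}
 \nabla f(u,u+v) = f(u+v) - f(u) = -\tfrac{1}{2}\sum_{i=1}^{K} F_i^K\, v_i,
\end{equation*}
which depends only on $v$.

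The only genuine obstacle is the combinatorial simplification of $f_2$ into a linear form in the digits; once this is in hand, the identification of coefficients and the stationarity statement are automatic. One could alternatively prove the formula by induction on $K$ in the spirit of Lemma~\ref{lem:phi_barphi}, but the direct linear computation above seems considerably shorter.
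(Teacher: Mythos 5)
Your proof is correct, and it reaches the paper's intermediate formula $f_2(a)=\tfrac12\sum_{i=1}^K a_i(1+K-2i)$ (equation \eqref{eq:f2-expli}) by a genuinely different route. The paper proves that identity by induction on $K$, using the recursions $f_2(1a)=f_2(a)+\bar\alpha_1(a)$ and $f_2(-1a)=f_2(a)-\alpha_1(a)$ established earlier, which give $f_2(1a)+f_2(-1a)=2f_2(a)-f_1(a)$ and $f_2(1a)-f_2(-1a)=K$. You instead compute $f_2$ directly from its definition: a neighbour $b$ of $a$ with $(a,b)\in E_K^+$ differing in exactly two digits must satisfy $b_i-a_i=-2$ and $b_j-a_j=+2$ at the two positions $i<j$ of change (alternating signs, first negative), so $\alpha_2(a)$ counts pairs $i<j$ with $a_i=1,\ a_j=-1$, and symmetrically for $\bar\alpha_2$; the indicator algebra $\ind_{\{a_i=1\}}\ind_{\{a_j=-1\}}-\ind_{\{a_i=-1\}}\ind_{\{a_j=1\}}=\tfrac{a_i-a_j}{2}$ then yields the closed form in one line. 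This is more self-contained (it does not rely on the recursive description of $E_{K+1}^+$ in terms of $E_K^\pm$) and arguably more transparent. The remaining steps — substituting into \eqref{eq:gradA}, matching coefficients, checking the constant via $\sum_{i=1}^K(2K+3-2i)=K(K+2)$, and reading off stationarity from the affine form of $f$ — are the same in substance as the paper's, and all your computations check out.
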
 
\begin{proof}
Since we already know that $f_1(a)= \sum_{i=1}^K a_i,$
it is enough to  show by induction on  $ K $ that 
\begin{equation}
  \label{eq:f2-expli}
  f_2(a) = \frac12 \sum_{i=1}^K a_i (1 + K - 2 i).
\end{equation}
Obviously the formula is true for $ K = 1.$
Because of~\eqref{eq:f2+} and~\eqref{eq:f2-}, 
\begin{eqnarray*}
  f_2(1a)+ f_2(-1a) &=& 2 f_2(a) - f_1(a) \\
  f_2(1a)- f_2(-1a) &=& K. 
\end{eqnarray*}
Hence~\eqref{eq:f2-expli} is proved for 
$K+1.$
Owing to $f_1(a)= \sum_{i=1}^K a_i,$ and~\eqref{eq:f2-expli}, 
it is obvious that $\nabla f_1, \; \nabla f_2 $ are stationary and 
consequently $\nabla f $ is a stationary gradient vector field.
\end{proof}

\subsection{Proof of theorem \ref{t:var}}




We denote by $(\nabla f, B)$ the  decomposition of $ A $ as per Proposition \ref{prop_dec_usual}.

Back to the original problem, we recall that
$$\forall n \geq 0,Z^{(1)}_{n+1} - Z^{(1)}_n  = B(Y_{n},Y_{n+1})+ \nabla f(Y_n, Y_{n+1}).$$

Let us denote $M_n = Z^{(1)}_n - f(Y_n)$. Let  $\mathcal{F}_n=\sigma ((Y_k,Z^{(1)}_k), \; k \le n) $, we have

$$\mathbb{E}[M_{n+1} - M_n | \mathcal{F}_n ] = (\nabla \cdot B)(Y_n) = 0, $$
and $ (M_n)_{n \in \mathbb N} $ is a martingale. 

We now sketch out how to apply the Central Limit Theorem for Markov chains. Let $e_n=(Y_n,Y_{n+1}),$
$(e_n),{n \geq 0}$ is a Markov chain on $E_K.$ 
Then our quantity of interest $Z_n^{(1)}$ is an additive
observable of the process $(e_n),{n \geq 0}$, as

\begin{equation*}
Z_{n}^{(1)} - Z_0^{(1)} = \sum_{k = 0}^{n-1} Z_{k + 1}^{(1)} - Z_{k}^{(1)} = \sum_{k = 0}^{n - 1} A(e_k).
\end{equation*}

The Central Limit Theorem for Markov chains (see e.g. \cite{Meyn09}) shows that 
$(\frac{1}{\sqrt{n}} Z^{(1)}_{\lfloor nt \rfloor})_{t \geq 0}$
converges as $n \rightarrow + \infty$ to a Brownian motion, with variance given by 

\begin{align*}
 \sigma_K^2 &= \lim_{n \rightarrow + \infty} \frac{1}{n} \mathbb{E} \left[\left(Z^{(1)}_n\right)^2\right] \\
            &=  \lim_{n \rightarrow + \infty} \frac{1}{n} \mathbb{E} \left[M_n^2\right]+
\lim_{n \rightarrow + \infty} \frac{1}{n} \mathbb{E} \left[ f(Y_n)^2\right] +
\lim_{n \rightarrow + \infty} \frac{2}{n} \mathbb{E} \left[\left(f(Y_n) M_n \right)\right].
\end{align*} 

 Let us first compute $ \lim_{n \rightarrow + \infty} \frac{1}{n} \mathbb{E} \left[M_n^2\right].$
We can remark that $ M_n^2 - \sum_{i = 0}^{n-1} B(Y_i,Y_{i+1})^2 $ 
is a $ \mathcal{F}_n $ martingale, hence 
$  \lim_{n \rightarrow + \infty} \frac{1}{n} \mathbb{E} \left[M_n^2\right]=\lim_{n \rightarrow + \infty} \frac{1}{n} \mathbb{E} \left[ \sum_{i = 0}^{n-1} \Big(B(Y_i,Y_{i+1})   \Big)^2\right].$

 
If $\mu$ denotes the invariant measure for the random walk $(Y_n)_{n \geq 0}$, by ergodicity, we get
$$ \lim_{n \rightarrow + \infty} \frac{1}{n} \mathbb{E} \left[ \sum_{i = 0}^{n-1} \Big(B(Y_i,Y_{i+1})   \Big)^2\right]=\mathbb{E}_\mu \left[\left(B(Y_0, Y_1)\right)^2\right].$$

Under $ \mu $ the distribution of $(Y_0,Y_1)$ is uniform on $ E_K$ because
$ Z_1 $ is  uniformly chosen among all neighbours of $Z_0,$ hence 
\begin{equation}
\label{eq:inc-lim} 
 \lim_{n \rightarrow + \infty}\frac{1}{n} \mathbb{E}(M_n^2) = \| B\|^2,
\end{equation}
Using that  $f$ does not depend on $n,$
we obtain
$$ \lim_{n \rightarrow + \infty} \frac{1}{n} \mathbb{E} \left[ f(Y_n)^2\right] = 0,$$
and by Cauchy Schwarz inequality and~\eqref{eq:inc-lim}
 \begin{equation*}
  \lim_{n \rightarrow + \infty} \frac{2}{n} \mathbb{E} \left[\left(f(Y_n) M_n \right)\right]=0.
\end{equation*} 

So we get
 \begin{equation*}
 \sigma_K^2 = \lim_{n \rightarrow + \infty} \frac{1}{n} \mathbb{E} \left[ \sum_{i = 0}^{n-1} \Big(B(Y_i,Y_{i+1})   \Big)^2\right],
\end{equation*}

and  by ergodicity, 

\begin{equation*}
 \sigma_K^2 = \mathbb{E}_\mu \left[\left(B(Y_0, Y_1)\right)^2\right].
\end{equation*}

Since, under  $ \mu,$ the distribution of $(Y_0,Y_1)$ is uniform on $ E_K$ 
\begin{equation*}
 \sigma_K^2 = \left\| B \right\|^2.
\end{equation*}
 
By orthogonality of $B$ and $\nabla f$,

\begin{equation*}
 \sigma_K^2 = \left\| A \right\|^2 - \langle A, \nabla f \rangle.
\end{equation*}

Thus, it remains  to compute $\langle A, \nabla f \rangle$ (since $\left\|A \right\| = 1$, by definition).

At this point we use the fact  that $\nabla f$ is  a
stationary  field, in the sense of Definition~\ref{def:stat}.
Then if we denote by 
\begin{equation*}
a = (1, \ldots, 1), \quad z_i = (1, \ldots, -1, 1, \ldots, 1)
\end{equation*}
(where $-1$ is in $i$-th position) and, 
for $1 \leq i \leq K$, we have by~\eqref{eq:gradA-expli}
\begin{equation*}
 F_i = \nabla f(a, z_i),
\end{equation*}

Now  we compute $\langle A, \nabla f \rangle$ as a  function of $F_1$, the value $\nabla f$ on the edge $(1,\cdots, 1) \rightarrow (-1,1, \cdots, 1)$.

By~\eqref{eq:vect-fi}
$$  \langle A, \nabla f \rangle   = \frac{2}{D_K}\sum_{(a,b) \in E_K^+} (\nabla f)(a,b). $$
If $\phi = \left\{ 1a', a' \in V_{K-1} \right\}$ and $\psi = \left\{( -1)b', b' \in V_{K-1} \right\}$, then using the relation between $ E_{K-1} $ and $ E_K,$  
$$  \langle A, \nabla f \rangle   =   \frac{2}{D_K}\sum_{(a,b) \in E_K^+\cap \phi } (\nabla f)(a,b) + \frac{2}{D_K}\sum_{(a,b) \in E_K^+\cap \psi } (\nabla f)(a,b) + \frac{2}{D_K}J_{\phi,\psi}(\nabla f).$$
By stationarity of $  \nabla f, $  
\begin{align*}
\langle A, \nabla f \rangle  &= \frac{4}{D_K}\sum_{(a,b) \in E_K^+\cap \phi } (\nabla f)(a,b) + \frac{2}{D_K}J_{\phi,\psi}(\nabla f) 
\\
& = \frac{4}{D_K}\sum_{(a,b) \in E_K^+\cap \phi } (\nabla f)(a,b) + \frac{4}{D_K}J_{\phi,\psi}(\nabla f) - \frac{2}{D_K} J_{\phi,\psi}(\nabla f).
\end{align*}


Then, because of the definition of $\phi $ and $ \psi,$  
\begin{align*}
\langle A, \nabla f \rangle  & = \frac{4}{D_K}\sum_{(a', b') \in E_{K-1}^+} (\nabla f)(1a',1b') + (\nabla f)(1b',(-1)a') - \frac{2}{D_K}J_{\phi,V_K \setminus \phi}(\nabla f),
\\ &= \frac{4}{D_K}\sum_{(a',\,b') \in E_{K-1}^+} f(1b')- f(1a')+f((-1)a') -f(1b') - \frac{2}{D_K}J_{\phi,V_K \setminus \phi}(A).
\end{align*}
Then by stationarity of  $\nabla f$
\begin{align*}
\langle A, \nabla f \rangle   &=4 \frac{\card E_{K-1}^+}{D_K}F_1 - 2\frac{\delta_K}{D_K}
\\ &= \frac2{K+2}. 
\end{align*}






\begin{remark}
In the proof, the way we guessed~\eqref{eqFiK} is a bit mysterious. 
Assuming that $\nabla f $ is a stationary gradient vector field, the family $(F_i)_{i \in [1,K]}$ can be computed considering the system of equations given by
$$\forall i \in [1,K], J_{M_i,N_i } = 0, $$
where $M_i = \left\{ (a_j)_{j \in [1,K]} \in V_K, a_i =1\right\}$ and $N_i = \left\{ (a_j)_{j \in [1,K]} \in V_K, a_i =-1\right\}$.

This leads to the following system:
 \begin{equation*}
 \left[ \begin{matrix}3^{K-1} & -3^{K-2} & \ddots & -3& -1
 \\                   -3^{K-2} & 3^{K-1}  & \ddots & -9 & -3
 \\                  \ddots & \ddots & \ddots & \ddots & \ddots 
 \\              -3 &           -9 & \ddots & 3^{K-1} & -3^{K-2}
 \\                -1 & -3 & \ddots & -3^{K-2} & 3^{K-1}\end{matrix} \right]   F = \left[\begin{matrix}3^{K-1}
 \\                   3^{K-2}
 \\                  \vdots 
 \\ 3
 \\                1\end{matrix} \right].
 \end{equation*}
 
 The unique solution is given by :
 
 \begin{equation*}
 \forall j \leq K, F_j = \frac{3+2 (K-j)}{K+2}.
 \end{equation*}
Even if the guess is correct, we did not find another way as the techniques used in the Section~\ref{sec:hodge-A}
to show that $\nabla f $ is a stationary gradient vector field. 
\end{remark}

\section{Some further questions}

In this final section we briefly outline some related problems.

\begin{itemize}
 \item It is possible to make sense of the process when $K$ is infinite. Several questions arise : what happens to the process of one marked walker~? 
Is there a scaling limit under equilibrium for the ``shape''process $(Z^{(k)} - Z^{(1)})_{1 \leq k}$ ?

Another natural step would be to let $K$ grow with $N$ in a suitable way, so as to get a scaling limit for the two-parameter process
$(Z_n^{(k)} - Z_n^{(1)})_{1 \leq k \leq K+1, n \in \mathbb{Z}}$.

\item One may also ask about different quantities, such as the diameter of the set of walkers under the invariant measure for the entire walk.

\item One may also consider random walkers conditioned on satisfying different shape constraints, and on graphs more general than $\mathbb{Z}$.
 As a starting example, what happens if we work on a torus, i.e. if we force also $|Z^{(K+1)}_n - Z^{(1)}_n| = 1$ ? The ''shape'' chain changes in this case and it is no longer irreducible over $\{-1, 1 \}^{K+1}$
(one may check that the number of $-1$ symbols is fixed, and that this enumerates the recurrence classes). It is interesting to point out that
this setup is the one chosen by E. Lieb for the computation of the ''six-vertex constant`` in \cite{Lieb67}.
\end{itemize}

\section{Acknowledgments}

We warmly thank Charles Bordenave for fruitful conversations and useful comments.

\def\cprime{$'$}
\providecommand{\bysame}{\leavevmode\hbox to3em{\hrulefill}\thinspace}
\providecommand{\MR}{\relax\ifhmode\unskip\space\fi MR }
\providecommand{\MRhref}[2]{%
  \href{http://www.ams.org/mathscinet-getitem?mr=#1}{#2}
}
\providecommand{\href}[2]{#2}






\end{document}